\documentclass[12pt,a4paper]{article}
\usepackage[utf8x]{inputenc}
\usepackage{amsmath}
\usepackage{amsfonts}
\usepackage{amssymb}
\usepackage{amsthm}
\usepackage{mathrsfs}
\usepackage{fancyhdr}
\usepackage{graphicx}
\usepackage[usenames,x11names]{xcolor}
\usepackage{arabtex}
\usepackage{framed}
\usepackage[top=2cm,bottom=2cm,margin=2cm]{geometry}
\usepackage{cancel}
\usepackage{array}   

\usepackage[dvipdfm,pdfstartview=FitH,colorlinks=true,linkcolor=Red4,urlcolor=Red4,%
filecolor=green,citecolor=red]{hyperref}

\title{Nontrivial upper bounds for the least common multiple of an arithmetic progression}
\author{\sc Sid Ali BOUSLA \\
Laboratoire de Mathématiques appliquées \\
Faculté des Sciences Exactes \\
Université de Bejaia, 06000 Bejaia, Algeria \\[1mm]
\href{mailto:bouslasidali@gmail.com}{bouslasidali@gmail.com} \\[1mm]
}
\date{}

\def\lcm{\mathrm{lcm}}


\def\EMdash{\leavevmode\hbox to 10.6mm{\vrule height .63ex depth -.59ex
    width 10mm\hfill}}

\theoremstyle{plain}
\numberwithin{equation}{section}
\newtheorem{thm}{Theorem}[section]

\newtheorem{lemma}[thm]{Lemma}

\newtheorem{coll}[thm]{Corollary}

\pagestyle{fancy}

\parindent=1cm

\begin{document}
\maketitle
\begin{abstract}
In this paper, we establish some nontrivial and effective upper bounds for the least common multiple of consecutive terms of a finite arithmetic progression. Precisely, we prove that for any two coprime positive integers $a$ and $b$, with $b\geq 2$, we have
\[\lcm\left(a,a+b,\dots,a+nb\right) \leq \left(c_1\cdot b\log b\right)^{n+\left\lfloor \frac{a}{b}\right\rfloor}~~~~(\forall n\geq b+1),\]
where $c_1=41.30142$. If in addition $b$ is a prime number and $a<b$, then we prove that for any $n\geq b+1$, we have $\lcm\left(a,a+b,\dots,a+nb\right) \leq \left(c_2\cdot b^{\frac{b}{b-1}}\right)^n$, where $c_2=12.30641$. Finally, we apply those inequalities to estimate the arithmetic function $M$ defined by $M(n):=\frac{1}{\varphi(n)}\sum_{\substack{1\leq\ell\leq n \\ \ell \wedge n=1}}\frac{1}{\ell}$ ($\forall n \geq 1$), as well as some values of the generalized Chebyshev function $\theta(x;k,\ell)$.
\end{abstract}
\noindent\textbf{MSC 2010:} Primary 11A05, 11B25. \\
\textbf{Keywords:} Least common multiple, arithmetic progressions.

\section{Introduction and Notation}
Throughout this note, we let $\mathbb{N}^*$ denote the set $\mathbb{N} \setminus \{0\}$ of positive integers. The letter $p$ always denotes a prime number and the sequence of all prime numbers is denoted by $\left(p_n\right)_{n\in\mathbb{N}^*}$. We let $\vartheta_p$ denote the usual $p$-adic valuation. We denote by $\lfloor .\rfloor$ the integer-part function. For $x>0$ and $k,\ell\in\mathbb{N}^*$, we set $\pi(x):=\sum_{p\leq x}1$ and $\theta(x;k,\ell):=\sum_{p\leq x}'\log p$, where the sum $\sum'$ is over all the primes in the arithmetic progression of first term $\ell$ and common difference $k$. We also denote by $k\wedge\ell$ the greatest common divisor of $k$ and $\ell$. For $n\in\mathbb{N^*}$, we define $\omega(n):=\sum_{p\mid n}1$ and $M(n):=\frac{1}{\varphi(n)}\sum_{\substack{1\leq\ell\leq n \\ \ell\wedge n=1}}\frac{1}{\ell}$, where $\varphi$ denotes the Euler totient function. For $a,b\in\mathbb{N^*}$, we define $L_{a,b,n}:=\lcm\left(a,a+b,\dots,a+nb\right)$. In order to simplify some statements, we set $c_1:=41.30142$, $c_2:=12.30641$, $c_3:=1.25507$, $c_4:=3.35609$, $c_5:=1.38402$, $c_6:=1.57681$ and $c_7:=2.1284$.

Chebyshev \cite{Chebyshev} was the first to obtain in 1850 an effective bound for the function $\pi$. In particular, he proved that the prime number theorem is equivalent to the statement \linebreak$\log\lcm(1,2,\dots,n)\sim_{+\infty} n$. Recently, several authors are interested to estimate the least common multiple of consecutive terms of arithmetic progressions. Hanson \cite{Hanson} has shown, since 1972, that $\lcm\left(1,2,\dots,n\right)\leq 3^n$ ($\forall n\in\mathbb{N^*}$). In 1982, Nair \cite{Nair} established a simple proof that $\lcm\left(1,2,\dots,n\right)\geq 2^n$ ($\forall n\geq 7$). In the continuation, Farhi \cite{Farhi1,Farhi2} proved that for any coprime positive integers $a$ and $b$, we have
\begin{equation}\label{f1}
L_{a,b,n}\geq a(b+1)^{n-1}~~~~(\forall n\in\mathbb{N}).
\end{equation}
Furthermore, many authors obtained improvements for the last estimate when $n$ is sufficiently large in terms of $a$ and $b$ (see e.g., \cite{Hong1,Hong2,Kane}). In another direction, various asymptotic estimates for the least common multiple of integer sequences have been obtained by several authors; for example, Bateman \cite{Bateman} proved that for any $a,b\in\mathbb{Z}$, with $b>0$, $a+b>0$ and $a\wedge b=1$, we have 
\begin{equation}\label{bat}
\log L_{a,b,n} \sim_{+\infty} bM(b)n.
\end{equation}
For other type of sequences such as polynomial sequences or Lucas sequences, there are also several works that devoted to studying their least common multiple (see e.g., \cite{Bousla,Cilleruelo,Hong3,Hong4,Matiyasevich,Oon}).

In this paper, we establish some nontrivial upper bounds for the least common multiple of consecutive terms of an arithmetic progression. As a consequence, we derive on the one hand effective bounds and an equivalent for $M(n)$ (when $n$ tends to infinity), and on the other hand a nontrivial upper bound for $\theta\left(x;k,\ell\right)$ when $x$, $k$ and $\ell$ satisfy some conditions. It must be noted that although estimates of $\theta\left(x;k,\ell\right)$ exist in the mathematical literature (see e.g., \cite{theta}), the methods of obtaining them are (unlike our own) not elementary in the sense that they do use the theory of analytic functions. Our main results are given in the following:
\begin{thm}\label{t1}
Let $a$ and $b$ be two coprime positive integers, with $b\geq 2$. Then, for any integer $n\geq b+1$, we have
\[L_{a,b,n}\leq \left(c_1\cdot b\log b\right)^{n+\left\lfloor \frac{a}{b}\right\rfloor}.\]
\end{thm}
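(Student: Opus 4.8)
The plan is to first reduce to the case $1\le a<b$, and then to estimate $\log L_{a,b,n}$ through $p$-adic valuations, isolating a Mertens-type main term.

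\emph{Reduction.} Writing $a=qb+r$ with $q=\lfloor a/b\rfloor$ and $0\le r<b$, coprimality of $a$ and $b$ forces $r\wedge b=1$ and $r\ge 1$ (as $b\ge 2$). Each term $a+kb=r+(q+k)b$ is one of the numbers $r,r+b,\dots,r+(q+n)b$, so $L_{a,b,n}\mid L_{r,b,\,n+q}$. Since $n+q\ge b+1$ and $\lfloor r/b\rfloor=0$, the case $a<b$ applied to $r$ (with $n$ replaced by $n+q$) yields $L_{a,b,n}\le L_{r,b,\,n+q}\le (c_1 b\log b)^{n+q}=(c_1 b\log b)^{n+\lfloor a/b\rfloor}$. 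Thus it suffices to treat $1\le a<b$ and prove $\log L_{a,b,n}\le n\log(c_1 b\log b)$.

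\emph{Valuation split.} Because $a\wedge b=1$, no prime dividing $b$ divides any term, so
\[\log L_{a,b,n}=\sum_{p\nmid b}\Big(\max_{0\le k\le n}\vartheta_p(a+kb)\Big)\log p.\]
I would split the primes at the threshold $n$. For each $k$ write $a+kb=u_k v_k$, where $u_k$ collects the prime factors $p\le n$ and $v_k$ the prime factors $p>n$. A prime $p>n$ divides at most one of the $n+1$ terms; moreover, since $n\ge b+1$ it cannot divide any term to a power $\ge 2$, for then $p^2\ge(n+1)^2>(n+1)b\ge a+nb$. Hence the large primes contribute exactly $\sum_{k=0}^n\log v_k$ to $\log L_{a,b,n}$, while the small primes contribute $\sum_{p\le n,\,p\nmid b}(\max_k\vartheta_p(a+kb))\log p$, which is at most $\pi(n)\log((n+1)b)$, and so $O(n)$ by Chebyshev's estimate together with $b\le n$.

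\emph{Main estimate.} The heart of the matter is $\sum_k\log v_k=\sum_k\log(a+kb)-\sum_k\log u_k$. The first sum is bounded by $\sum_{k=0}^n\log((k+1)b)=(n+1)\log b+\log((n+1)!)$, which I estimate by Stirling. For the second, write $\sum_k\log u_k=\sum_{p\le n,\,p\nmid b}V_p\log p$ with $V_p=\sum_{j\ge 1}\#\{0\le k\le n:\ p^j\mid a+kb\}$, and bound each count below by $(n+1)/p^j-1$; the accumulated $-1$'s and truncation cost only $\pi(n)\log n=O(n)$, leaving $V_p\ge (n+1)/(p-1)-O(\log n/\log p)$. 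Summing and invoking an explicit form of Mertens' theorem gives $\sum_{p\le n,\,p\nmid b}\tfrac{\log p}{p-1}\ge \log n-\sum_{p\mid b}\tfrac{\log p}{p-1}-O(1)$. Since $\log((n+1)!)$ and $(n+1)\log n$ agree up to $O(n)$, the $\log n$-sized contributions cancel, and one is left with $\sum_k\log v_k\le (n+1)\big(\log b+\sum_{p\mid b}\tfrac{\log p}{p-1}+O(1)\big)$.

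\emph{Main obstacle.} Two points remain, and this is where the work concentrates. First, one must establish the lemma $\sum_{p\mid b}\tfrac{\log p}{p-1}\le \log\log b+O(1)$; the extremal configuration is $b$ a primorial, where $\log b\approx\sum_{p\le y}\log p\approx y$ forces $\sum_{p\mid b}\tfrac{\log p}{p-1}\approx\log y\approx\log\log b$, so it should be proved separately with an explicit constant. Second---and this is the genuinely delicate part---all the $O(1)$ and $O(n)$ remainders above (from the floor errors in $\#\{k:\ p^j\mid a+kb\}$, from the explicit Mertens and Chebyshev constants, from Stirling, from the small-prime term, and from replacing $n+1$ by $n$) must be carried as explicit numbers and shown to collapse into the single factor $\log c_1$ with $c_1=41.30142$. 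The hypothesis $n\ge b+1$ is precisely what makes every one of these remainders lower order than the main term $n(\log b+\log\log b)$.
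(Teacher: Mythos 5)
Your skeleton is genuinely parallel to the paper's proof: the reduction to $1\le a<b$, the split of the primes at the threshold $n$, the observation that a prime $p>n$ divides at most one term (and, under $n\ge b+1$, only to the first power), and the final lemma bounding $\sum_{p\mid b}\frac{\log p}{p-1}$ by $\log\log b+O(1)$ are precisely the ingredients of the paper (its Lemmas \ref{6}, \ref{5}, \ref{7} and \ref{8}). But the two items you defer are not finishing touches. The first, your $\sum_{p\mid b}\frac{\log p}{p-1}\le\log\log b+O(1)$, is a substantial piece of the paper: Lemma \ref{7} reduces to the primorial case by monotonicity of $x\mapsto\frac{\log x}{x}$ and then needs Robin's bound $\omega(b)\le c_5\log b/\log\log b$, Rosser--Schoenfeld's $p_m\le m(\log m+\log\log m)$, and finite verification for small $\omega(b)$; asserting it is deferring real work, not a routine remark.

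The second deferred item is worse: for the route you chose, the remainders provably do \emph{not} collapse to $\log c_1$. Relaxing each count $\#\{0\le k\le n:\ p^j\mid a+kb\}$ to $(n+1)/p^j-1$ instead of keeping $\lfloor(n+1)/p^j\rfloor$, and then reassembling through Stirling and Mertens, incurs a loss that survives all cancellation: since $\sum_{p\le n}\frac{\log p}{p-1}=\log n-\gamma+o(1)$ (with $\gamma$ Euler's constant) and $\sum_{(p,j):\,p^j\le n+1}\log p\sim n$, while Stirling refunds only $n$, your upper bound for the large-prime contribution exceeds the paper's by $\gamma n+o(n)$, and this deficit is minimized (not removed) by truncating at $p^j\le n+1$. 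So even granting best-possible asymptotic forms of Mertens and Stirling, and matching the paper's constants everywhere else, your method yields a constant no smaller than $e^{\gamma}c_1\approx 73.6$; explicit versions valid for all $n\ge b+1$ push it higher. The theorem with $c_1=41.30142$ is therefore out of reach as set up. The missing idea is to never relax the floors: since $x_j\le p^j-1$, one has $\#\{0\le k\le n:\ p^j\mid a+kb\}=\left\lfloor\frac{n-x_j}{p^j}\right\rfloor+1\ge\left\lfloor\frac{n+1}{p^j}\right\rfloor$, whence $\vartheta_p\left(a(a+b)\cdots(a+nb)\right)\ge\vartheta_p\left((n+1)!\right)$ for every $p\le n$ with $p\nmid b$. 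Then the factorial cancels \emph{exactly} against $a(a+b)\cdots(a+nb)\le(a+nb)\,b^n\,n!$, no Stirling or Mertens enters at all, and the constant is simply $c_2\cdot c_4=41.3014\ldots$, with $c_2=e^{2c_3}$ covering the small primes (your $\pi(n)\log((n+1)b)\le 2c_3n$ step is the same as the paper's Lemma \ref{6}) and $c_4=c_6c_7$ coming from the $\sum_{p\mid b}$ lemma. That exact divisibility comparison is the content of the paper's Lemma \ref{5}, and it is the one step separating your outline from a proof of the stated constant.
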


\begin{thm}\label{t2}
Let $a$ be a positive integer and $b$ be a prime number greater than $a$. Then, for any integer $n\geq b+1$, we have
\[L_{a,b,n}\leq \left(c_2\cdot b^{\frac{b}{b-1}}\right)^n.\] 
\end{thm}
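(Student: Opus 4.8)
The plan is to estimate $\log L_{a,b,n}$ through its prime factorisation $\log L_{a,b,n}=\sum_{p}\alpha_p\log p$, where $\alpha_p:=\max_{0\le k\le n}\vartheta_p(a+kb)$. Since $b$ is prime and $1\le a<b$, every term satisfies $a+kb\equiv a\not\equiv 0\pmod b$, so $b$ divides no term and is absent from the product. Set $P:=\prod_{k=0}^{n}(a+kb)$ and $N:=a+nb$; because $n\ge b+1$ forces $b\le n-1$, one has $N\le(n+1)b-1<(n+1)^2$. I would split the primes at $n$. For $p>n$ one checks that $p$ divides at most one term (two would give $p\mid(k_1-k_2)b$ with $0<|k_1-k_2|\le n<p$), and that $p^2\ge(n+1)^2>N$, whence $\alpha_p\le 1$ and $\sum_{p>n}\alpha_p\log p=\sum_{p>n}\vartheta_p(P)\log p$. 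This gives the exact identity
\[\log L_{a,b,n}=\underbrace{\sum_{p\le n}\alpha_p\log p}_{A}+\underbrace{\Big(\log P-\sum_{p\le n}\vartheta_p(P)\log p\Big)}_{B},\]
reducing the problem to bounding the large‑prime block $B$ from above and the error $A$.

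For $B$ I would bound the product using $a<b$: since $a+kb\le(k+1)b-1<(k+1)b$, we get $P<b^{\,n+1}(n+1)!$, so $\log P<(n+1)\log b+\log\big((n+1)!\big)$, and Stirling gives $\log((n+1)!)=(n+1)\log(n+1)-(n+1)+O(\log n)$. The heart of the matter is a matching \emph{lower} bound for the subtracted valuation sum. For each prime $p\le n$ with $p\ne b$, counting the indices $k\in\{0,\dots,n\}$ with $p^{\,j}\mid a+kb$ (a single residue class modulo $p^{\,j}$, as $p\nmid b$) yields $\vartheta_p(P)\ge\frac{n+1}{p-1}-\log_p(n+1)-O(1)$; summing and invoking Mertens' estimate $\sum_{p\le n}\frac{\log p}{p-1}=\log n+O(1)$ produces
\[\sum_{p\le n}\vartheta_p(P)\log p\ge(n+1)\Big(\log n-\tfrac{\log b}{\,b-1}\Big)-O(n).\]
The decisive point, and the one place where primality of $b$ is genuinely used, is that the \emph{only} prime missing from $\sum_{p\le n}\frac{\log p}{p-1}$ is $b$ itself, contributing $\frac{\log b}{b-1}$; for a composite modulus all of its prime divisors would be lost (a quantity of size up to $\log\log b$ for highly composite $b$), which is presumably what degrades Theorem~\ref{t1} to a base factor $\log b$. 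Subtracting, the $(n+1)\log(n+1)$ and $-(n+1)\log n$ terms cancel up to $O(1)$, leaving
\[B\le(n+1)\log b+(n+1)\tfrac{\log b}{\,b-1}+O(n)=(n+1)\tfrac{b}{\,b-1}\log b+O(n),\]
which is already of the desired shape $\frac{nb}{b-1}\log b$ plus a linear remainder. This is consistent with Bateman's asymptotic \eqref{bat}, since for prime $b$ one has $bM(b)=\frac{b}{b-1}\sum_{\ell=1}^{b-1}\tfrac1\ell$, exhibiting the same $\frac{b}{b-1}$ factor.

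It then remains to control the genuinely linear terms. For $A$ I would use $\alpha_p\log p\le\log N$ for each $p\le n$ together with an explicit upper bound for $\pi(n)$: since $b\le n-1$ gives $\log N<2\log(n+1)$, one obtains $A\le\pi(n)\log N=O(n)$. Collecting everything yields $\log L_{a,b,n}\le\frac{nb}{b-1}\log b+Cn$ for an explicit $C$, that is, $L_{a,b,n}\le\big(e^{C}b^{b/(b-1)}\big)^n$. The hypothesis $n\ge b+1$ is used twice: it guarantees $b<n$ (so the split at $n$ behaves and $b$ is the sole excluded prime) and it supplies the slack absorbed into the $\frac{b}{b-1}$ factor. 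The main obstacle is purely quantitative: to reach the stated value $c_2=12.30641$ rather than a mere $O(1)$, one must calibrate every $O(n)$ above with sharp explicit forms of the Chebyshev and Mertens estimates, uniformly for all $n\ge b+1$ — this is presumably the role of the auxiliary constants $c_3,\dots,c_7$ — and verify that the accumulated constant stays below $\log c_2$ in the worst case, which occurs for $n$ close to $b$.
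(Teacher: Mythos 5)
Your decomposition is the same as the paper's: split the primes at $n$; for $p>n$ each prime divides at most one term, to the first power; for $p\le n$ count the indices $k$ with $p^j\mid a+kb$ as a single residue class modulo $p^j$; and primality of $b$ enters exactly where you say it does, as the unique excluded prime. The genuine gap is in how you exploit that counting, and it is fatal to the stated constant. You smooth the floor-function counts into $\vartheta_p(P)\ge\frac{n+1}{p-1}-\log_p(n+1)-O(1)$ and then invoke Mertens and Stirling. Summed over $p\le n$, the smoothing throws away about $\pi(n)\log(n+1)+\theta(n)\approx(c_3+1)\,n$, of which the Stirling-versus-Mertens comparison refunds only about $(1-\gamma)n$; so your block $B$ carries a surplus of roughly $1.85n$ beyond $(n+1)\frac{b}{b-1}\log b$, even using the sharpest explicit forms of $\theta(n)$ and $\pi(n)$. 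Meanwhile your block $A$ already costs $2c_3n\approx 2.51n$, and the paper's constant is exactly this budget: $c_2=e^{2c_3}$, coming from Hanson's bound $\pi(x)\le c_3 x/\log x$ and nothing else. Your route therefore ends at $\log L_{a,b,n}\le\frac{nb}{b-1}\log b+4.4n$ or so, i.e.\ a constant near $80$ rather than $12.30641$. The "calibration" you defer to the end is not a technicality --- it fails, and no choice of explicit Chebyshev/Mertens inputs rescues it.

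The paper's fix (Lemma \ref{5}) is to keep your counting unsmoothed. Writing $P=a(a+b)\cdots(a+nb)$, the identity $\left\lfloor\frac{n-x_j}{p^j}\right\rfloor+1=\left\lfloor\frac{n-x_j+p^j}{p^j}\right\rfloor\ge\left\lfloor\frac{n+1}{p^j}\right\rfloor$ gives the exact inequality $\vartheta_p(P)\ge\vartheta_p\left((n+1)!\right)$ for every $p\le n$ with $p\nmid b$, hence a genuine divisibility statement: $\prod_{p>n}p^{\vartheta_p(L_{a,b,n})}$ divides $P\cdot b^{\vartheta_b(n!)}/n!$ (up to a further harmless factor in the denominator). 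Then $P/n!\le(a+nb)\,b^n$ (termwise, using $a<b$) and $\vartheta_b(n!)\le n/(b-1)$ bound the large-prime block by $\log(a+nb)+n\log b+\frac{n}{b-1}\log b$ with zero analytic loss: no Mertens constant, no truncation terms, no Stirling error. The entire linear slack of the proof then sits in the small-prime block, which Hanson's theorem caps at $n\log c_2$, and the stray $\log(a+nb)$ is cancelled by the factor $(a+nb)^{\omega(b)}$ saved in Lemma \ref{6}. In short, your plan proves the correct qualitative statement $L_{a,b,n}\le\left(C\,b^{b/(b-1)}\right)^n$ for some absolute $C$, but reaching the stated $c_2$ requires replacing the Mertens/Stirling bookkeeping by this exact factorial comparison.
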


\begin{coll}\label{c1}
For any integer $r\geq 2$, we have
\[\log (r+1)\leq rM(r)\leq \log r+\log\log r +\log c_1.\]
\end{coll}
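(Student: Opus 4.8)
The plan is to connect $rM(r)$ to the asymptotic and explicit bounds already available for the least common multiple of an arithmetic progression. By Bateman's result~\eqref{bat}, for coprime $a,b$ we have $\log L_{a,b,n} \sim_{+\infty} bM(b)n$, so the quantity $rM(r)$ is precisely the growth rate of $\frac{1}{n}\log L_{a,r,n}$ as $n\to\infty$. The strategy is therefore to sandwich this growth rate: the upper bound on $rM(r)$ should come from dividing the logarithm of Theorem~\ref{t1}'s bound by $n$ and letting $n\to\infty$, while the lower bound should come from the same limiting procedure applied to Farhi's lower bound~\eqref{f1}.

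For the lower bound $\log(r+1)\le rM(r)$, I would choose coprime $a$ and $b=r$ (for instance $a=1$, which is coprime to every $r$) and apply~\eqref{f1}, which gives $L_{1,r,n}\ge 1\cdot(r+1)^{n-1}$. Taking logarithms, dividing by $n$, and passing to the limit yields $\lim_{n\to\infty}\frac{1}{n}\log L_{1,r,n}\ge \log(r+1)$. Since Bateman's equivalence identifies this limit as $rM(r)$, the inequality $rM(r)\ge\log(r+1)$ follows immediately. The role of $a=1$ here is cosmetic; any fixed coprime $a$ gives the same limit because the factor $a$ contributes only $O(1/n)$ to the averaged logarithm.

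For the upper bound, I would again fix $a=1$ and $r\ge 2$, and apply Theorem~\ref{t1} with $b=r$. Since $a=1<b$ we have $\lfloor a/b\rfloor=0$, so the exponent is simply $n$, giving $L_{1,r,n}\le (c_1\, r\log r)^{n}$ for all $n\ge r+1$. Taking logarithms and dividing by $n$ gives $\frac{1}{n}\log L_{1,r,n}\le \log(c_1\, r\log r)=\log r+\log\log r+\log c_1$ for every admissible $n$. Letting $n\to\infty$ and invoking Bateman's equivalence again converts the left-hand side into $rM(r)$, which yields the desired upper bound $rM(r)\le \log r+\log\log r+\log c_1$.

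The only genuine subtlety, and the step I would treat most carefully, is the legitimacy of extracting $rM(r)$ as the exact value of $\lim_{n\to\infty}\frac{1}{n}\log L_{1,r,n}$ from~\eqref{bat}: Bateman's statement is an asymptotic equivalence, so I must verify that the hypotheses $b>0$, $a+b>0$, $a\wedge b=1$ are met by the choice $a=1$, $b=r\ge 2$, which they plainly are. Everything else is a routine manipulation of logarithms and limits; there is no combinatorial or number-theoretic obstacle beyond invoking the three results~\eqref{f1},~\eqref{bat}, and Theorem~\ref{t1} in the right order. One should also note that $\log\log r$ is only real for $r\ge 2$, which is exactly the stated range, so no edge case at $r=1$ needs handling.
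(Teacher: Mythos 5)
Your proposal is correct and follows essentially the same route as the paper's own proof: specializing to $a=1$, $b=r$, sandwiching $\frac{1}{n}\log L_{1,r,n}$ between Farhi's lower bound \eqref{f1} and Theorem~\ref{t1}, then letting $n\to\infty$ and identifying the limit as $rM(r)$ via Bateman's equivalence \eqref{bat}. No meaningful differences to report.
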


\noindent The following corollary is immediate.
 
\begin{coll}\label{c2}
We have
\[M(r)\sim_{+\infty}\frac{\log r}{r}.\]
\end{coll}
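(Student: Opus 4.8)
The plan is to derive this directly from the two-sided bound in Corollary \ref{c1} by a squeeze argument, since all the arithmetic content has already been extracted there. First I would recall that for every integer $r \geq 2$ we have
\[\log(r+1) \leq rM(r) \leq \log r + \log\log r + \log c_1,\]
and that $\log r > 0$ on this range, so I may divide the whole chain by $\log r$ without reversing any inequality. This yields
\[\frac{\log(r+1)}{\log r} \leq \frac{rM(r)}{\log r} \leq 1 + \frac{\log\log r}{\log r} + \frac{\log c_1}{\log r}.\]

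Next I would compute the limits of the outer two expressions as $r \to +\infty$. For the lower bound, writing $\log(r+1) = \log r + \log\!\left(1 + \tfrac{1}{r}\right)$ gives $\frac{\log(r+1)}{\log r} = 1 + \frac{\log(1+1/r)}{\log r} \to 1$. For the upper bound, both $\frac{\log\log r}{\log r}$ and $\frac{\log c_1}{\log r}$ tend to $0$ (the first because $\log\log r = o(\log r)$, the second because $\log c_1$ is a fixed constant), so the upper bound tends to $1$ as well. By the squeeze theorem it then follows that $\frac{rM(r)}{\log r} \to 1$, which is precisely the assertion $rM(r) \sim_{+\infty} \log r$, equivalently $M(r) \sim_{+\infty} \frac{\log r}{r}$.

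There is essentially no obstacle to overcome here: the estimate is genuinely immediate once Corollary \ref{c1} is in hand, the only thing to check being the elementary fact that $\log\log r = o(\log r)$, which underlies the vanishing of the correction term in the upper bound. The single point worth stating explicitly, to make the squeeze rigorous, is that the lower bound $\log(r+1)/\log r$ stays above $1$ while the upper bound descends to $1$, so no separate handling of the two sides is required beyond observing both share the common limit $1$.
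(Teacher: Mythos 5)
Your proof is correct and is precisely the argument the paper intends: the paper simply declares Corollary \ref{c2} ``immediate'' from Corollary \ref{c1}, and your squeeze argument (dividing the double inequality by $\log r$ and noting both bounds tend to $1$) is the standard way to make that explicit.
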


\begin{coll}\label{c3}
Let $\ell$ be a positive integer and $k$ be a prime number greater than $\ell$. Then, for any real number $x\geq k(k+1)$, we have
\[\theta(x;k,\ell)\leq  x\left(\frac{2c_3}{k}+\frac{\log k}{k-1}\right).\]
\end{coll}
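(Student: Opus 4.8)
The plan is to connect the Chebyshev-type sum $\theta(x;k,\ell)$ to the least common multiple $L_{\ell,k,n}$ and then invoke Theorem \ref{t2}. The starting observation is that every prime $p \leq x$ lying in the arithmetic progression of first term $\ell$ and common difference $k$ has the form $p = \ell + mk$ with $0 \leq m \leq \lfloor (x-\ell)/k\rfloor =: N$; such a prime equals the term $\ell + mk$ appearing in $L_{\ell,k,N} = \lcm(\ell, \ell+k, \dots, \ell+Nk)$ and hence divides it. Since distinct primes contribute distinct prime factors, their product divides $L_{\ell,k,N}$, and taking logarithms yields
\[\theta(x;k,\ell) \leq \log L_{\ell,k,N}.\]

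First I would record that the hypotheses legitimately feed Theorem \ref{t2}: as $k$ is prime and $1 \leq \ell < k$, the integers $\ell$ and $k$ are automatically coprime with $k > \ell$, so $(a,b) = (\ell,k)$ is admissible. The only delicate point is the index constraint $n \geq k+1$ required by that theorem, which $N$ need not satisfy. To repair this I would set $n := \max\{k+1, N\}$; then $N \leq n$ gives $L_{\ell,k,N} \mid L_{\ell,k,n}$ and hence $\theta(x;k,\ell) \leq \log L_{\ell,k,n}$, while $n \geq k+1$ makes Theorem \ref{t2} applicable, so that
\[\theta(x;k,\ell) \leq n\log\left(c_2\,k^{\frac{k}{k-1}}\right) = n\left(\log c_2 + \frac{k}{k-1}\log k\right).\]

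The heart of the argument is then to show $n \leq x/k$, for this is exactly the bound that converts the right-hand side into the claimed expression. On one hand $N = \lfloor (x-\ell)/k\rfloor \leq (x-\ell)/k \leq x/k$; on the other hand the hypothesis $x \geq k(k+1)$ gives $k+1 \leq x/k$. Taking the maximum of two quantities each at most $x/k$ yields $n \leq x/k$, and substituting produces
\[\theta(x;k,\ell) \leq \frac{x}{k}\left(\log c_2 + \frac{k}{k-1}\log k\right) = \frac{x\log c_2}{k} + \frac{x\log k}{k-1}.\]

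Finally I would replace $\log c_2$ by $2c_3$: a direct numerical check shows $\log c_2 = \log(12.30641) < 2.51014 = 2c_3$, whence $\frac{x\log c_2}{k} \leq \frac{2c_3 x}{k}$, which gives the stated inequality $\theta(x;k,\ell) \leq x\left(\frac{2c_3}{k} + \frac{\log k}{k-1}\right)$. I expect the main obstacle to be the bookkeeping around the index $n$: one must ensure simultaneously that $n$ is large enough for Theorem \ref{t2} to apply ($n \geq k+1$) and small enough that $n \leq x/k$, and it is precisely the threshold $x \geq k(k+1)$ in the hypothesis that makes these two requirements compatible in the worst case $N = k$.
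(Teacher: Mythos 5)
Your proof is correct and follows essentially the same route as the paper: bound $\theta(x;k,\ell)$ by $\log L_{\ell,k,n}$, apply Theorem \ref{t2}, show the index is at most $x/k$, and use $\log c_2 \leq 2c_3$. The only (immaterial) difference is bookkeeping: the paper takes $m:=\left\lfloor x/k\right\rfloor$ outright, which satisfies $m\geq k+1$ and $m\leq x/k$ simultaneously, whereas you reach the same index bound via $n:=\max\{k+1,\lfloor (x-\ell)/k\rfloor\}$.
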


\section{Preliminaries}

\subsection{Previously known results}  

\begin{thm}[Rosser et al. \cite{Rosser}]\label{1}
\noindent
\begin{enumerate}
\item For any integer $n\geq 2$, we have $\sum_{p\leq n}\frac{\log p}{p}\leq \log n$.
\item For any integer $n\geq 6$, we have $p_n\leq n\left(\log n +\log \log n\right)$.
\item The series $\sum_{p}\frac{\log p}{p(p-1)}$ converges to the number $0,7553666111\dots<\log c_7$.\label{3}
\end{enumerate}

\end{thm}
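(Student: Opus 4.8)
The three assertions are classical explicit estimates from the work of Rosser and Schoenfeld, and my plan would be to treat each separately, since they rest on quite different mechanisms.

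For part (1), which is a Mertens-type inequality, the natural starting point is Legendre's formula
$$\log(n!) = \sum_{p\le n}\left(\sum_{j\ge 1}\left\lfloor \frac{n}{p^j}\right\rfloor\right)\log p,$$
combined with the elementary bound $\log(n!)\le n\log n$ coming from $n!\le n^n$. Discarding all but the $j=1$ terms gives $\sum_{p\le n}\lfloor n/p\rfloor\log p\le n\log n$, and since $\lfloor n/p\rfloor > n/p-1$ one obtains $\sum_{p\le n}\frac{\log p}{p} < \log n + \frac{\theta(n)}{n}$, where $\theta$ is the ordinary Chebyshev function. This already yields $\log n + O(1)$, but the clean inequality with no error term is more delicate: one must feed in an explicit Chebyshev bound of the shape $\theta(n)\le cn$ together with a sharper accounting of the higher prime powers, and then verify the small values of $n$ by direct computation. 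I expect this last step --- tightening the constant down to exactly $\log n$ uniformly in $n\ge 2$ --- to be the real obstacle, and it is precisely where the careful explicit analysis of Rosser and Schoenfeld is needed.

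For part (2), I would reduce the bound on the $n$-th prime to a lower bound on the prime-counting function via the equivalence $p_n \le x \iff \pi(x)\ge n$. It therefore suffices to show $\pi\left(n(\log n+\log\log n)\right)\ge n$ for $n\ge 6$. Plugging $x=n(\log n+\log\log n)$ into an explicit lower bound of the form $\pi(x)\ge x/(\log x - 1)$ (valid for $x$ beyond some threshold), one checks that the resulting quantity is at least $n$, the finitely many remaining cases being verified numerically. The main difficulty here is having an explicit, unconditional lower bound for $\pi(x)$ strong enough to absorb the double-logarithm term, which again is supplied by the Rosser--Schoenfeld estimates.

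Part (3) is the most self-contained. Convergence is immediate from $\frac{\log p}{p(p-1)} \le \frac{2\log p}{p^2}$ and the convergence of $\sum_n \frac{\log n}{n^2}$. To pin the value below $\log c_7$, I would sum the series over the primes up to a suitable cutoff $P$ explicitly, and bound the tail by
$$\sum_{p>P}\frac{\log p}{p(p-1)} < \int_P^\infty \frac{\log t}{(t-1)^2}\,dt,$$
an integral that has an elementary closed form. Choosing $P$ large enough that the partial sum plus this tail bound stays below $\log c_7 = \log(2.1284)$ completes the verification; the only real work is numerical, and no genuine obstacle arises beyond carrying enough terms.
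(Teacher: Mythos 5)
First, a structural remark: the paper does not prove this statement at all. Theorem \ref{1} sits in the subsection ``Previously known results'' and is imported verbatim from Rosser and Schoenfeld \cite{Rosser}; the paper's only ``proof'' is the citation. So your proposal can only be assessed on its own merits. On those merits, parts (2) and (3) are essentially sound. For (2), the inversion $p_n\le x\Longleftrightarrow\pi(x)\ge n$ applied to $x=n(\log n+\log\log n)$ does close: writing $\log x=\log n+\log\log n+\varepsilon$ with $\varepsilon=\log\bigl(1+\tfrac{\log\log n}{\log n}\bigr)\le\log\bigl(1+\tfrac1e\bigr)<\tfrac12$, even Rosser--Schoenfeld's own bound $\pi(x)>x/\bigl(\log x-\tfrac12\bigr)$ for $x\ge 67$ gives $\pi(x)>n$, and the small cases $6\le n\le 19$ are checked by hand; deferring the analytic core (the explicit lower bound for $\pi$) to \cite{Rosser} is legitimate, since that is exactly where the theorem comes from. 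For (3), the integral comparison is valid because $\frac{\log t}{(t-1)^2}$ is decreasing for $t\ge 2$, but beware how thin the margin is: $\log c_7=\log 2.1284\approx 0.7553705$ against a limit of $0.7553666\dots$, i.e.\ about $4\times 10^{-6}$ of room. With your all-integers tail $\int_P^\infty\frac{\log t}{(t-1)^2}\,dt=\frac{\log P}{P-1}+\log\frac{P}{P-1}\approx\frac{\log P+1}{P}$ you must take $P$ of order several million (or restrict the tail to primes via a Chebyshev-type bound); still ``only numerical'', as you say, but not a casually chosen cutoff.

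The genuine gap is in part (1), at precisely the step you delegate to ``sharper accounting''. Your chain yields $\sum_{p\le n}\frac{\log p}{p}<\log n+\frac{\theta(n)}{n}$, and the two repairs you name cannot fix it by themselves. Recovering the discarded $j\ge 2$ terms of Legendre's formula is worth at most $n\sum_p\frac{\log p}{p(p-1)}+o(n)\approx 0.755\,n$, while no admissible Chebyshev constant can have $c<1$: indeed $\theta(n)/n\to 1$, and $\theta(n)>n$ even holds for infinitely many $n$ by Littlewood's oscillation theorem. So the best your scheme can produce is $\log n+\frac{\theta(n)}{n}-0.755+o(1)\ge\log n+0.24+o(1)$, which never reaches $\log n$. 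The missing ingredient is Stirling: your opening step $\log(n!)\le n\log n$ throws away the term $-n$, and that term is indispensable. Using instead $n!\le e\sqrt{n}\,(n/e)^n$, i.e.\ $\log(n!)\le n\log n-n+\tfrac12\log n+1$, together with the recovered prime-power terms, one gets $\sum_{p\le n}\frac{\log p}{p}<\log n-1-0.755+\frac{\theta(n)}{n}+o(1)$, and now even Chebyshev's elementary $\theta(n)<n\log 4$ suffices, since $\log 4\approx 1.386<1.755$; explicit forms of the $o(1)$ terms and a finite verification for small $n$ then complete an entirely elementary proof. (This is consistent with the asymptotic truth $\sum_{p\le n}\frac{\log p}{p}=\log n-1.332\ldots+o(1)$, which shows there is genuine room; Rosser and Schoenfeld themselves obtain the inequality by analytic means with far greater precision.) In short: parts (2) and (3) stand, but part (1) as written cannot prove the stated bound until $n!\le n^n$ is replaced by a Stirling-type estimate.
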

  
\begin{thm}[Hanson \cite{Hanson}]\label{2}
For any real number $x>1$, we have
\[\pi(x)\leq c_3 \frac{x}{\log x}.\]
\end{thm}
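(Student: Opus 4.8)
The plan is to deduce the counting bound from an effective \emph{linear} upper bound on the Chebyshev function $\theta(x):=\sum_{p\le x}\log p$, since the $\log$-weighted count is far easier to control directly than $\pi$ itself. The natural elementary input, consistent with the attribution, is Hanson's own product-of-primes estimate $\prod_{p\le x}p<3^{x}$, i.e. $\theta(x)<x\log 3$ for every $x>1$, which comes from the divisibility of the middle (multi)nomial coefficients together with a Sylvester-type greedy decomposition of $1$. I would take this, or any sharper explicit estimate $\theta(x)\le Ax$, as the starting point; note that the estimates of Theorem~\ref{1} give lower-type information on $\pi$ (through $p_{n}\le n(\log n+\log\log n)$) and so are not directly usable for an \emph{upper} bound.

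The second step is Abel summation. Writing $\pi(x)=\sum_{p\le x}\frac{\log p}{\log p}$ and summing by parts against $\theta$ gives
\[\pi(x)=\frac{\theta(x)}{\log x}+\int_{2}^{x}\frac{\theta(t)}{t\log^{2}t}\,dt.\]
Inserting $\theta(t)\le At$ yields $\pi(x)\le \frac{Ax}{\log x}+A\int_{2}^{x}\frac{dt}{\log^{2}t}$, and since $\int_{2}^{x}\frac{dt}{\log^{2}t}=o\!\left(\frac{x}{\log x}\right)$ the leading constant is exactly $A$. I would make this quantitative by splitting $[2,x]$ at $\sqrt{x}$, which gives $\int_{2}^{x}\frac{dt}{\log^{2}t}\le \frac{\sqrt{x}}{\log^{2}2}+\frac{4x}{\log^{2}x}$; the desired inequality $\pi(x)\le c_{3}\frac{x}{\log x}$ then reduces to $\frac{\log x}{x}\int_{2}^{x}\frac{dt}{\log^{2}t}\le \frac{c_{3}-A}{A}$, which the split bound forces to hold for all $x$ beyond an explicit threshold $x_{0}$.

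Finally, the range $1<x\le x_{0}$ is settled by direct computation: because $\pi$ is constant between consecutive primes while $\frac{\log x}{x}$ decreases for $x>e$, the ratio $\frac{\pi(x)\log x}{x}$ attains its supremum on each such interval at its left endpoint, so one only evaluates it at the primes $p\le x_{0}$ and checks that the maximum equals $\frac{30\log 113}{113}=1.25506\ldots<c_{3}$, attained at $x=113$. The hard part is precisely the tension between the two regimes: the crude input $A=\log 3=1.0986\ldots$ is comfortably below $c_{3}$, yet the true extremal value $1.25506\ldots$ is far above $A$, so the constraint $\frac{4}{\log x}\lesssim\frac{c_{3}-A}{A}$ pushes $x_{0}$ out to an astronomically large (though explicit) size, making the finite verification cover an enormous range of primes. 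Keeping $x_{0}$ — and hence the computation — manageable is exactly what motivates replacing $\log 3$ by a much sharper effective bound on $\theta$ with leading constant close to $1$, at the cost of a heavier (but still elementary-plus-finite) verification.
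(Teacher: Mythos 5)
You should first know what you are being compared against: the paper does not prove this statement at all --- Theorem \ref{2} sits in the ``Previously known results'' subsection and is imported by citation (and the explicit constant $c_3$, which is essentially $\frac{30\log 113}{113}=1.25506\ldots$, in fact traces back to Rosser and Schoenfeld \cite{Rosser} rather than to Hanson's product bound). So your attempt can only be judged on its own merits. The machinery you set up is sound: the Abel-summation identity $\pi(x)=\frac{\theta(x)}{\log x}+\int_{2}^{x}\frac{\theta(t)}{t\log^{2}t}\,dt$ is correct, the split of the integral at $\sqrt{x}$ gives the stated explicit bound, and the monotonicity remark reducing the finite range to prime arguments is correct for $x>e$ (the interval $[2,e)$ needs a trivial separate check, since $\frac{\log x}{x}$ is increasing there).

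The genuine gap is the one you flag yourself but never close. With the only input you actually possess, $A=\log 3$, your inequality $A\bigl(1+\frac{\log x}{\sqrt{x}\log^{2}2}+\frac{4}{\log x}\bigr)\le c_3$ forces $\log x\ge \frac{4A}{c_3-A}\approx 28.1$, i.e.\ a threshold $x_0$ of order $1.6\times 10^{12}$. The ``finite verification'' for $x\le x_0$ is then a check of $\frac{\pi(p)\log p}{p}$ at roughly $5\times 10^{10}$ primes; it is never carried out, it is not a routine computation, and it is precisely where the entire content of the constant lives (the extremal value at $x=113$ is what makes $c_3$ work for all $x>1$). Asserting that this maximum ``equals $\frac{30\log 113}{113}$'' over the whole range up to $x_0$ is, with your choice of $A$, essentially asserting the theorem on that range. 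Your fallback --- replace $\log 3$ by a sharper explicit linear bound on $\theta$ with constant near $1$ --- is equally unproven within the proposal: no such bound is stated, let alone established, and obtaining one is far harder than anything else in the argument (it is why Rosser and Schoenfeld worked with zero-free regions rather than elementary estimates). As it stands, you have correctly reduced the theorem to either an enormous unperformed computation or an unproven stronger input, which is a reduction, not a proof.
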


\begin{thm}[Robin \cite{Robin}]\label{4}
For any integer $n\geq 3$, we have
\[\omega(n)\leq c_5 \frac{\log n}{\log\log n}.\]
\end{thm}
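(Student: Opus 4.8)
The plan is to bound $\omega(n)$ by separating the prime divisors of $n$ according to their size. Fix a threshold $y>1$, to be chosen later as a function of $n$, and split $\omega(n)=\omega_{\leq y}(n)+\omega_{>y}(n)$, where $\omega_{\leq y}(n)$ counts the distinct prime factors of $n$ that are at most $y$ and $\omega_{>y}(n)$ counts those exceeding $y$. The small primes are trivially bounded by the total number of primes up to $y$, so $\omega_{\leq y}(n)\leq\pi(y)$. For the large primes I would observe that each such prime exceeds $y$ while their product divides $n$, whence $y^{\omega_{>y}(n)}\leq\prod_{p\mid n,\,p>y}p\leq n$; taking logarithms gives $\omega_{>y}(n)\leq\frac{\log n}{\log y}$. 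Adding the two contributions and invoking Hanson's bound (Theorem~\ref{2}), $\pi(y)\leq c_3\frac{y}{\log y}$, yields
\[
\omega(n)\leq c_3\frac{y}{\log y}+\frac{\log n}{\log y}=\frac{c_3 y+\log n}{\log y}.
\]

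The next step is to choose $y$ so as to minimize the right-hand side. A short differentiation shows the optimum sits near $y\approx\frac{\log n}{c_3\log\log n}$, for which $\log y\approx\log\log n$ and the first term $c_3\frac{y}{\log y}$ becomes lower order than the second; substituting this value (or a clean nearby admissible one) leaves $\omega(n)\lesssim\frac{\log n}{\log\log n}$ with leading constant $1$. Note in particular that the naive choice $y=\log n$ would only give the constant $c_3+1\approx 2.26$, so the sharper threshold is essential. The explicit constant $c_5=1.38402$ then emerges by fixing a convenient admissible $y$ valid for all sufficiently large $n$ and carefully retaining the lower-order terms inside $\log y$.

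The main obstacle is precisely this last effective step: making the stated constant $c_5$ hold for every integer $n\geq 3$ rather than merely asymptotically. The correction terms $\log c_3$ and $\log\log\log n$ hidden inside $\log y$ decay very slowly, so I would make the bound uniform by (i) selecting the threshold $y$ so that $\log y$ can be estimated cleanly from below throughout the relevant range, reducing the inequality to an elementary comparison of the shape $0.384\,\log\log n\geq 1.384\left(\log c_3+\log\log\log n\right)$, and (ii) disposing of the finitely many small values of $n$ by a direct numerical check, since there $\frac{\log n}{\log\log n}$ behaves erratically and the asymptotic optimization is not sharp. Once the range of $n$ is split in this way, each piece reduces to a routine monotonicity-and-calculus verification that the chosen $y$ delivers the constant $1.38402$.
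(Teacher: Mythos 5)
The paper does not prove this statement at all: it is quoted verbatim from Robin \cite{Robin}, so your attempt must be judged on its own merits. Your splitting of $\omega(n)$ at a threshold $y$, with $\omega_{\leq y}(n)\leq\pi(y)\leq c_3\,y/\log y$ and $\omega_{>y}(n)\leq\log n/\log y$, is the standard route to the \emph{asymptotic} maximal order $\omega(n)\leq(1+o(1))\frac{\log n}{\log\log n}$, and that part is sound. The genuine gap is in the effective step, and it is quantitatively fatal rather than a matter of bookkeeping. Put $u=\log\log n$ and take your near-optimal $y=\frac{\log n}{c_3\log\log n}$, so that $\log y=u-\log u-\log c_3$; the two contributions combine to $\omega(n)\leq\frac{\log n}{\log y}\bigl(1+\frac{1}{u}\bigr)$, and the target inequality $\omega(n)\leq c_5\frac{\log n}{u}$ becomes $u+1\leq c_5\,(u-\log u-\log c_3)$, i.e.\ $0.38402\,u\geq 1+c_5(\log u+\log c_3)$. (Your displayed comparison drops the $+1$ coming from the $\pi(y)$ term; even without it the inequality fails until $u\approx 8.8$.) With the $+1$ restored this first holds around $u\approx 12.7$, i.e.\ for $n\geq\exp(\exp(12.7))$, an integer with on the order of $10^{5}$ decimal digits. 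Optimizing over all admissible $y=(\log n)^{\alpha}$ does not help: minimizing $\frac{1+c_3(\log n)^{\alpha-1}}{\alpha}$ still forces $u\gtrsim 12$ before the value drops below $1.38402$. So the ``finitely many small values of $n$'' you propose to dispose of by direct numerical check comprise \emph{all} integers up to a doubly exponential bound; no such check is possible, and the plan as written cannot be completed.

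The missing idea is the extremality reduction that Robin actually uses. If $\omega(n)=k$, then $n\geq N_k:=p_1p_2\cdots p_k$, and since $t\mapsto\frac{\log t}{\log\log t}$ is increasing for $t>e^{e}$ (the range $3\leq n\leq 15$ is checked by hand), it suffices to prove the bound at primorials: $k\,\log\log N_k\leq c_5\log N_k$, i.e.\ $k\leq c_5\,\theta(p_k)/\log\theta(p_k)$ where $\theta(p_k)=\log N_k$. This follows from effective Rosser--Schoenfeld-type lower bounds for $\theta(p_k)$ once $k$ exceeds a modest explicit bound, together with a finite verification over small $k$ --- and that finite check is feasible because it runs over $k$, not over $n$; indeed the constant $c_5=1.38402$ is the maximum of $\frac{\omega(n)\log\log n}{\log n}$, attained at a specific primorial, which is why no smooth optimization of a threshold $y$ can reproduce it. Your plan needs this reduction (or an equivalent ``worst case is squarefree with the smallest possible primes'' argument) inserted between the asymptotic optimization and the small-case verification; with it, your threshold argument becomes superfluous for large $k$ as well, which is presumably why Robin's proof takes the primorial route from the start.
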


\subsection{Lemmas}
 
\begin{lemma}\label{6}
Let $a$ and $b$ be two coprime positive integers such that $a<b$. Then, for any integer $n\geq b+1$, we have
\[\prod_{p\leq n}p^{\vartheta_p\left(L_{a,b,n}\right)}\leq \frac{{c_2}^n}{(a+nb)^{\omega(b)}}.\]
\end{lemma}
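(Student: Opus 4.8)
The plan is to pass to logarithms and prove the equivalent additive statement
\[\sum_{p\le n}\vartheta_p(L_{a,b,n})\log p+\omega(b)\log(a+nb)\le n\log c_2,\]
writing $X:=a+nb$ for brevity. First I would record the two elementary facts forced by the hypotheses. Since $a\wedge b=1$, no term $a+ib$ is divisible by a prime $p\mid b$, so $\vartheta_p(L_{a,b,n})=0$ for every such $p$; thus the sum really runs over the primes $p\le n$ with $p\nmid b$, and the $\omega(b)$ primes dividing $b$ (all of which are $\le b<n$) are dropped. Second, every term satisfies $a+ib\le X$, so $p^{\vartheta_p(L_{a,b,n})}\mid a+i_pb\le X$ gives the crude per-prime bound $\vartheta_p(L_{a,b,n})\le\lfloor\log_p X\rfloor$. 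Finally, from $a<b\le n-1$ one gets $X<(n+1)(n-1)<n^2$, hence $\sqrt X<n$.

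The heart of the matter is to sum the per-prime bound efficiently. Summing $\vartheta_p(L_{a,b,n})\le\lfloor\log_p X\rfloor$ gives
\[\sum_{p\le n}\vartheta_p(L_{a,b,n})\log p\le\sum_{p\le n}\lfloor\log_p X\rfloor\log p=\sum_{\substack{p^k\le X\\ p\le n}}\log p.\]
Because $\sqrt X<n$, any prime $p$ with $n<p\le X$ satisfies $p^2>X$ and so contributes only its first power; hence the right-hand sum splits as
\[\sum_{p\le n}\lfloor\log_p X\rfloor\log p=\Bigl(\sum_{p^k\le X}\log p-\sum_{p\le X}\log p\Bigr)+\sum_{p\le n}\log p.\]
The decisive gain is that the first bracket is only the contribution of the \emph{proper} prime powers, $\sum_{k\ge2}\sum_{p\le X^{1/k}}\log p$, which is of order $\sqrt X<n$ rather than of order $X$. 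By contrast, the naive estimate $\sum_{p\le n}\vartheta_p(L_{a,b,n})\log p\le\pi(n)\log X$ is far too lossy: with Theorem~\ref{2} and $\log X<2\log n$ it produces the constant $e^{2c_3}$, which lies just above $c_2$, so it is exactly the isolation of the first-power primes beyond $\sqrt X$ that must buy back the difference.

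I would then bound the two pieces by the results already available. For the genuine primes, Hanson's estimate (Theorem~\ref{2}) gives $\sum_{p\le n}\log p\le\pi(n)\log n\le c_3 n$; the same estimate applied to each $X^{1/k}$ bounds the proper prime powers by $\sum_{k\ge2}c_3X^{1/k}\le c_3\sqrt X+O(X^{1/3}\log X)$, and $\sqrt X<n$ turns this into $c_3 n$ up to a lower-order term. Together these give $\sum_{p\le n}\vartheta_p(L_{a,b,n})\log p\lesssim 2c_3 n$, the correct order since $\log c_2$ is essentially $2c_3$. It remains to install the $\omega(b)\log X$ term: the $\omega(b)$ primes dividing $b$ were discarded from the sum, and their forfeited budget $\sum_{p\mid b}\lfloor\log_p X\rfloor\log p$, combined with the slack produced by $\sqrt X<n$ and (for the residual dependence on $\omega(b)$) Robin's bound of Theorem~\ref{4}, must be shown to dominate $\omega(b)\log X$.

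The main obstacle is precisely this final constant bookkeeping. Because $\log c_2$ and $2c_3$ agree to four decimals there is essentially no room: one can afford neither the loss in $\sum_{p\le n}\log p\le c_3n$ nor in the proper-prime-power estimate, and the $\omega(b)\log X$ term must be absorbed almost exactly, since the forfeited budget $\lfloor\log_p X\rfloor\log p\le\log X$ falls short of $\log X$ by the rounding defect. I expect the delicate steps to be (i) an explicit, non-asymptotic bound on $\sum_{k\ge2}\sum_{p\le X^{1/k}}\log p$ sharp enough to beat $c_3\sqrt X$ by the required margin, and (ii) a direct check of the smallest cases $n\ge b+1$, where the lower-order terms and the $\omega(b)\log X$ contribution are proportionally largest; the precise value $c_2=12.30641$ is presumably pinned down by exactly this worst case.
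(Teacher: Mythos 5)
Your preparatory observations are sound (the vanishing of $\vartheta_p(L_{a,b,n})$ for $p\mid b$, the per-prime bound $\vartheta_p(L_{a,b,n})\le\lfloor\log_pX\rfloor$, and the identity splitting $\sum_{p\le n}\lfloor\log_pX\rfloor\log p$ into $\sum_{p\le n}\log p$ plus the proper-prime-power block, which is indeed valid because $\sqrt X<n$), but the plan built on them cannot be completed, and the obstruction is structural rather than a matter of the ``delicate bookkeeping'' you defer to steps (i) and (ii). The claimed decisive gain --- that the proper-prime-power block is of order $\sqrt X<n$ rather than of order $X$ --- evaporates exactly in the regime that pins the constant. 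The lemma must hold for every $b\le n-1$, and when $b$ is close to $n$ (say $a=1$, $b=n-1$, so $X=n^2-n+1$ and $\sqrt X>n-1$) your block $\sum_{k\ge2}\sum_{p\le X^{1/k}}\log p$ contains $\sum_{p\le\sqrt X}\log p$ with $\sqrt X\approx n$, a block of exactly the same size as your main block $\sum_{p\le n}\log p$. With the only tool the paper supplies (Theorem \ref{2}), each of these two blocks can only be bounded by $c_3n$, so your route yields $2c_3n$ \emph{plus} strictly positive extras: about $c_3X^{1/3}\log X=O(n^{2/3}\log n)$ from the powers $k\ge3$, plus the shortfall (up to $\log b$) you concede when trading the forfeited budget $\sum_{p\mid b}\lfloor\log_pX\rfloor\log p$ against $\omega(b)\log X$. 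The target, however, is $n\log c_2$ with $\log c_2=2.51012\ldots$, whereas $2c_3=2.51014$ with the paper's printed constant, so your bound overshoots outright; and even with the unrounded constant $1.25506$ the margin $n(\log c_2-2c_3)$ is of order $10^{-7}n$, hopelessly smaller than your $O(n^{2/3}\log n)$ extras except for astronomically large $n$. In particular, the bad cases are not finitely many small $n$ that could be ``checked directly'': they form the infinite family $b\approx n\to\infty$.

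The misdiagnosis that pushed you off course is the belief that the naive estimate is ``far too lossy'': it is precisely the paper's proof, and it closes cleanly for all $n\ge b+1$. Since $\vartheta_p(L_{a,b,n})=0$ for $p\mid b$ and $p^{\vartheta_p(L_{a,b,n})}\le a+nb$ for every $p$, one gets $\prod_{p\le n}p^{\vartheta_p(L_{a,b,n})}\le(a+nb)^{\pi(n)-\omega(b)}=(a+nb)^{\pi(n)}/(a+nb)^{\omega(b)}\le n^{2\pi(n)}/(a+nb)^{\omega(b)}$, and $n^{2\pi(n)}=e^{2\pi(n)\log n}\le e^{2c_3n}$ by Theorem \ref{2}. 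Two points deserve emphasis. First, the denominator $(a+nb)^{\omega(b)}$ comes out exactly and for free: each excluded prime forfeits a full factor $a+nb$, not the rounded $p^{\lfloor\log_p(a+nb)\rfloor}$, so the ``rounding defect'' you worry about never arises --- it is an artifact of your finer per-prime bound, which is exactly what should be discarded. Second, your observation that $e^{2\times1.25507}$ lands a hair above $c_2=12.30641$ is numerically correct, but it reflects only the paper's rounding of the classical Rosser--Schoenfeld constant $1.25506$ up to $c_3=1.25507$: one has $e^{2\times1.25506}=12.30640\ldots\le c_2$, and $c_2$ is visibly calibrated to that unrounded value. The correct reaction to the near-coincidence $\log c_2\approx2c_3$ was therefore to run the naive argument with the unrounded constant, not to seek savings from prime powers which, in the worst case $b\approx n$, simply do not exist.
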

\begin{proof}
Let $n\geq b+1$ be an integer. First, we remark that for any prime number $p$ dividing $b$, we have $\vartheta_p\left(L_{a,b,n}\right)=0$. Indeed, if $p\mid b$ then $p\nmid a$ (since $a\wedge b=1$) and therefore $p$ does not divide any term of the arithmetic sequence $\left(a+kb\right)_{k\in\mathbb{N}}$; thus $p\nmid L_{a,b,n}$. Next, for any prime number $p$, the number $p^{\vartheta_p\left(L_{a,b,n}\right)}$ is the highest power of $p$ that divides at least one of the numbers $a,a+b,\dots,a+nb$; so, we have $p^{\vartheta_p\left(L_{a,b,n}\right)}\leq a+nb$. Consequently, we have
\[\prod_{p\leq n}p^{\vartheta_p\left(L_{a,b,n}\right)}=\prod_{\begin{subarray}{c} p\leq n \\ p\nmid b \end{subarray}}p^{\vartheta_p\left(L_{a,b,n}\right)}\leq \prod_{\begin{subarray}{c} p\leq n \\ p\nmid b \end{subarray}}(a+nb)=(a+nb)^{\pi(n)-\omega(b)}.\]
Then, since $a+nb\leq n^2$ (because $n\geq b+1>a$), it follows that:
\[\prod_{p\leq n}p^{\vartheta_p\left(L_{a,b,n}\right)}\leq \frac{n^{2\pi(n)}}{(a+nb)^{\omega(b)}}=\frac{e^{2\pi(n)\log n}}{(a+nb)^{\omega(b)}}.\] 
The required estimate then follows from Theorem \ref{2}.
\end{proof}

\begin{lemma}\label{5}
Let $a$ and $b$ be two coprime positive integers. Then, for any natural number $n$, we have
\begin{equation}\label{A}
\prod_{p>n}p^{\vartheta_{p}\left(L_{a,b,n}\right)}~~\text{divides}~~\frac{a\left(a+b\right)\cdots \left(a+nb\right)\cdot \prod_{\begin{subarray}{c} p\leq n \\ p\mid b\end{subarray}}p^{\vartheta_{p}\left(n!\right)}}{n!\cdot\prod_{\begin{subarray}{c} p\leq n \\ p\nmid b\end{subarray}}p^{\vartheta_{p}\left(n+1\right)}}.
\end{equation}
\end{lemma}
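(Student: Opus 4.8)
The plan is to verify the asserted divisibility one prime at a time, that is, to show that for every prime $q$ the $q$-adic valuation of the left-hand side is at most that of the right-hand side. The basic tool is the identity $\vartheta_q(L_{a,b,n})=\max_{0\le k\le n}\vartheta_q(a+kb)$, valid because the valuation of a least common multiple is the maximum of the valuations of its arguments. Writing $P:=a(a+b)\cdots(a+nb)$ for the numerator product, I note that $\vartheta_q(P)=\sum_{k=0}^n\vartheta_q(a+kb)$, and I split the verification according to whether $q>n$ or $q\le n$.

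First I would dispose of the easy range $q>n$. Here the left-hand side contributes $\vartheta_q(L_{a,b,n})=\max_k\vartheta_q(a+kb)$, while on the right $q$ divides neither $n!$ nor any of the corrective factors $p^{\vartheta_p(n!)}$ and $p^{\vartheta_p(n+1)}$, since all those primes are $\le n$; hence $\vartheta_q(\mathrm{RHS})=\vartheta_q(P)=\sum_k\vartheta_q(a+kb)$. As a maximum is at most a sum of nonnegative terms, the desired inequality is immediate in this range.

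Next I would treat $q\le n$, where the left-hand side has valuation $0$, so it suffices to check that the right-hand side has nonnegative valuation at $q$. If $q\mid b$, the numerator factor $q^{\vartheta_q(n!)}$ exactly cancels the contribution of $n!$ while $q$ is absent from the other denominator product, leaving $\vartheta_q(\mathrm{RHS})=\vartheta_q(P)\ge0$. If $q\nmid b$, then $q$ occurs in the second denominator product with exponent $\vartheta_q(n+1)$, and the requirement reduces to
\[\vartheta_q(P)\ge\vartheta_q(n!)+\vartheta_q(n+1)=\vartheta_q\bigl((n+1)!\bigr).\]

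This last inequality is where the real work lies, and I expect it to be the main obstacle. To establish it I would fix $j\ge1$ and count the indices $k\in\{0,1,\dots,n\}$ with $q^j\mid a+kb$. Since $q\nmid b$, the integer $b$ is invertible modulo $q^j$, so the congruence $a+kb\equiv0\pmod{q^j}$ confines $k$ to a single residue class modulo $q^j$; consequently the number of admissible $k$ among the $n+1$ consecutive integers $0,1,\dots,n$ is at least $\lfloor(n+1)/q^j\rfloor$. Summing over $j$ then yields
\[\vartheta_q(P)=\sum_{k=0}^n\vartheta_q(a+kb)=\sum_{j\ge1}\#\{k:q^j\mid a+kb\}\ge\sum_{j\ge1}\left\lfloor\frac{n+1}{q^j}\right\rfloor=\vartheta_q\bigl((n+1)!\bigr),\]
the final equality being Legendre's formula. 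Combining the three cases shows that the left-hand side divides the right-hand side, proving the lemma; the residue-class counting is the crux, while the remainder is bookkeeping of which primes fall into which product.
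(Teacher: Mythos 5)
Your proof is correct and follows essentially the same strategy as the paper's: a prime-by-prime comparison of valuations, with the crux being the residue-class count showing that at least $\left\lfloor (n+1)/q^{j}\right\rfloor$ of the indices $k\in\{0,1,\dots,n\}$ satisfy $q^{j}\mid a+kb$, hence $\vartheta_q\bigl(a(a+b)\cdots(a+nb)\bigr)\geq\vartheta_q\bigl((n+1)!\bigr)$. The only cosmetic difference is in the range $q>n$, where you invoke the general bound ``maximum $\leq$ sum'' while the paper observes that $q$ divides at most one term of the progression (giving equality); both dispose of that case immediately.
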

\begin{proof}
For $n\in\{0,1\}$, the relation \eqref{A} is trivial. Suppose for the sequel that $n\geq 2$. Let $A_n$ and $B_n$ respectively denote the left-hand side and the right-hand side of \eqref{A}. We will show that $\vartheta_{q}\left(A_n\right)\leq \vartheta_{q}\left(B_n\right)$ for any prime number $q$, which concludes that $A_n$ divides $B_n$. Let $q$ be an arbitrary prime number. In the case where $q$ divides $b$, we have $q\nmid a$ (since $a\wedge b=1$) and thus $q$ does not divide any term of the arithmetic sequence $\left(a+kb\right)_{k\in\mathbb{N}}$, implying that $q$ does not divide $L_{a,b,n}$, and we have therefore
\[\vartheta_{q}\left(B_n\right)=\vartheta_q\left(\frac{\prod_{\begin{subarray}{c} p\leq n \\ p\mid b\end{subarray}}p^{\vartheta_{p}\left(n!\right)}}{n!}\right)=\vartheta_{q}\left(n!\right)-\vartheta_{q}\left(n!\right)=0=\vartheta_{q}\left(A_n\right).\]
It thus remains to show the inequality $\vartheta_{q}\left(A_n\right)\leq \vartheta_{q}\left(B_n\right)$ in the case where $q$ does not divide $b$. Suppose for the sequel that $q\nmid b$ and define $S_{a,b,n}:=\left\lbrace a,a+b,\dots,a+nb\right\rbrace$. We distinguish the following two cases: \\
$\bullet$ \underline{1\textsuperscript{st} case:} (if $q\leq n$). In this case, we have obviously $\vartheta_q\left(A_n\right)=0$. So, we must to show that $\vartheta_q\left(B_n\right)\geq 0$. For any positive integer $\ell$, we let $x_{\ell}$ denote the only solution of the congruence $a+bx\equiv 0 \pmod {q^{\ell}}$ in the set $\{0,1,\dots,q^{\ell}-1\}$. The number of elements of the set $S_{a,b,n}$ which are multiples of $q^{\ell}$ is then equal to the number of integers $x$ such as $0\leq x\leq n$ and $x\equiv x_{\ell} \pmod {q^{\ell}}$; which is clearly equal to $\left\lfloor \frac{n-x_{\ell}}{q^{\ell}}\right\rfloor +1$. Consequently, we have
\begin{align*}
\vartheta_{q}\left(a\left(a+b\right)\cdots \left(a+nb\right)\right)&=\sum_{\ell\geq 1}\left(\left\lfloor \frac{n-x_{\ell}}{q^{\ell}}\right\rfloor +1\right)=\sum_{\ell\geq 1}\left(\left\lfloor \frac{n-x_{\ell}+q^{\ell}}{q^{\ell}}\right\rfloor\right)\\&\geq \sum_{\ell\geq 1}\left\lfloor \frac{n+1}{q^{\ell}}\right\rfloor=\vartheta_{q}\left(\left(n+1\right)!\right).
\end{align*}
Hence $\vartheta_{q}\left(B_n\right)=\vartheta_{q}\left(a\left(a+b\right)\cdots \left(a+nb\right)\right)-\vartheta_{q}((n+1)!)\geq 0$, as required.\\[1mm]
$\bullet$ \underline{2\textsuperscript{nd} case:} (if $q>n$). In this case, since the congruence $a+bx\equiv 0\pmod q$ has exactly one solution in the set $\{0,1,\dots,q-1\}$ (because $b\wedge q=1$) then it has at most one solution in the set $\{0,1,\dots,n\}$. This means that $q$ divides at most one element of the set $S_{a,b,n}$. Consequently, we have $\vartheta_{q}\left(A_n\right)=\vartheta_q\left(L_{a,b,n}\right)=\vartheta_{q}\left(a\left(a+b\right)\cdots \left(a+nb\right)\right)=\vartheta_{q}\left(B_n\right)$. This confirms the required result and completes the proof of the lemma. 
\end{proof}

\begin{lemma}\label{7}
For any integer $b\geq 3$, we have
\[\prod_{p\mid b}{p^{1/p}}\leq c_6 \log b.\]
\end{lemma}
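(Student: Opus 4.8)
The plan is to pass to logarithms. Since $\prod_{p\mid b}p^{1/p}=\exp\bigl(\sum_{p\mid b}\tfrac{\log p}{p}\bigr)$, the assertion is equivalent to
\[\sum_{p\mid b}\frac{\log p}{p}\le\log\log b+\log c_6.\]
First I would reduce to an extremal shape for $b$. Both $\prod_{p\mid b}p^{1/p}$ and the left-hand sum depend only on the radical $\mathrm{rad}(b)=\prod_{p\mid b}p$, whereas $\log b\ge\log\mathrm{rad}(b)$; so it suffices to treat squarefree $b$. Writing $m=\omega(b)$ and $q_1<\dots<q_m$ for the prime divisors, I would then compare with the first $m$ primes: because $t\mapsto\tfrac{\log t}{t}$ attains its maximum over the primes at $t=3$ and is strictly decreasing for $t\ge 3$, the $m$ largest values of $\tfrac{\log p}{p}$ are, as soon as $m\ge 2$, exactly those at $p_1,\dots,p_m$. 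Hence $\sum_{p\mid b}\tfrac{\log p}{p}\le\sum_{i=1}^m\tfrac{\log p_i}{p_i}$, while simultaneously $\log b\ge\sum_{i=1}^m\log q_i\ge\sum_{i=1}^m\log p_i=\theta(p_m)$ (using $q_i\ge p_i$). Both extremes are realised by the primorial $b=p_1\cdots p_m$, so that $\tfrac{\prod_{p\mid b}p^{1/p}}{\log b}\le\tfrac{\prod_{i\le m}p_i^{1/p_i}}{\theta(p_m)}$, and the whole statement is reduced to the single family
\[\prod_{i=1}^m p_i^{1/p_i}\le c_6\,\theta(p_m)\qquad(m\ge 2).\]

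To prove this I would bound the product from above and $\theta(p_m)$ from below. Theorem~\ref{1}(1) applied at $n=p_m$ gives $\sum_{i=1}^m\tfrac{\log p_i}{p_i}=\sum_{p\le p_m}\tfrac{\log p}{p}\le\log p_m$, hence $\prod_{i=1}^m p_i^{1/p_i}\le p_m$. It then suffices to dispose of a Chebyshev-type lower bound $\theta(p_m)\ge p_m/c_6$, i.e.\ $\theta(x)\ge x/c_6$ for every prime $x\ge 5$; the constant $1/c_6\approx 0.634$ is weak enough that any explicit Chebyshev lower estimate delivers it with ample room. The small cases escape this scheme and I would settle them by direct evaluation: for $m=2$ the selection step already gives $\prod_{p\mid b}p^{1/p}\le 2^{1/2}3^{1/3}=2.039\ldots\le c_6\log 6$, while $\theta(p_2)\ge p_2/c_6$ just fails; and for $\omega(b)=1$ the selection principle degenerates (the maximum sits at $p=3$, not $p=2$), so I would bound $\prod_{p\mid b}p^{1/p}=q^{1/q}\le 3^{1/3}$ and compare with $c_6\log b$ using only $b\ge 3$ (and $b\ge 4$ when $q=2$).

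The main obstacle is precisely the lower bound on $\theta$. It is not among the preliminaries of Section~2 and, unlike the Mertens-type upper bound of Theorem~\ref{1}(1), it cannot be recovered from them by partial summation, which from an upper bound on $\sum_{p\le n}\tfrac{\log p}{p}$ only yields \emph{upper} bounds on $\theta$. One must therefore either import a standard explicit Chebyshev estimate or build a self-contained substitute. Such a substitute is available through the quoted results—bounding $p_m\le m(\log m+\log\log m)$ by Theorem~\ref{1}(2) and $m=\omega(b)\le c_5\log b/\log\log b$ by Theorem~\ref{4}, together with a crude $\theta(p_m)\ge\log\bigl((m+1)!\bigr)$—but this route pushes the threshold past which the asymptotics dominate up into the hundreds, so the price is a longer (essentially computer-assisted) check of the small cases. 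A secondary, more cosmetic difficulty is the non-monotonicity of $\tfrac{\log p}{p}$, whose peak at $p=3$ is exactly what forces the separate treatment of $m\le 2$ in the reduction.
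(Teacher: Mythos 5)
Your reduction steps are sound and in fact coincide with the paper's own: passing to logarithms, treating $\omega(b)=1$ separately via the maximum of $n^{1/n}$ at $n=3$, replacing the prime divisors of $b$ by the first $m=\omega(b)$ primes (valid precisely for $m\ge 2$, with the caveat about the peak of $\frac{\log t}{t}$ at $t=3$ that you correctly note), and the Mertens-type bound $\prod_{p\le p_m}p^{1/p}\le p_m$ from Theorem \ref{1}(1). The genuine gap is the final step, which you yourself flag as the ``main obstacle'': you reduce everything to a Chebyshev-type lower bound $\theta(p_m)\ge p_m/c_6$, which is neither among the paper's preliminaries nor derivable from them along your route, and your proposed self-contained substitute ($\theta(p_m)\ge\log\left((m+1)!\right)$ combined with Theorem \ref{1}(2)) only closes the inequality once $m$ is in the hundreds, leaving an infeasible verification of small cases. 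So as written the proof is not complete: one of the two ways you offer to finish it imports a result from outside the paper's toolkit without proof, and the other is not actually carried out.

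What you are missing is that the two tools you name in passing --- Theorem \ref{1}(2) and Robin's bound (Theorem \ref{4}) --- already suffice, provided you do not discard $\log b$ in favour of a factorial lower bound for $\theta(p_m)$; this is exactly what the paper does. For $m=\omega(b)\ge 6$ one has $b\ge 2\cdot3\cdot5\cdot7\cdot11\cdot13=30030$, hence both $m\le c_5\frac{\log b}{\log\log b}$ and $m\le\log b$, and therefore
\begin{align*}
p_m&\le m\left(\log m+\log\log m\right)\le c_5\frac{\log b}{\log\log b}\left(\left(\log c_5+\log\log b-\log\log\log b\right)+\log\log\log b\right)\\
&=\left(c_5+\frac{c_5\log c_5}{\log\log b}\right)\log b,
\end{align*}
where the crucial point is the exact cancellation of the two $\log\log\log b$ terms: the negative one comes from bounding $\log m$ via Robin, the positive one from bounding $\log\log m$ via $m\le\log b$. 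Since $\log\log b\ge\log\log 30030$, the parenthesis is at most $c_6$, giving $p_m\le c_6\log b$ with threshold $m\ge 6$; the remaining cases $m\in\{2,3,4,5\}$ are settled by the trivially small hand-check $\prod_{p\le p_m}p^{1/p}\le c_6\log\left(\prod_{p\le p_m}p\right)\le c_6\log b$. In other words, Robin's theorem applied to $b$ (equivalently, to the primorial) is precisely the elementary surrogate for the Chebyshev lower bound that your sketch lacks, and it is your factorial substitute --- which throws away the constraint relating $m$ to $\log b$ before exploiting it --- that inflates the threshold into the hundreds.
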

\begin{proof}
If $\omega(b)=1$ then there exists a prime number $q_1$ and a positive integer $m$ such that $b={q_1}^m$. Then we have $\prod_{p\mid b}p^{1/p}={q_1}^{1/{q_1}}\leq 3^{1/3}\leq c_6\log 3\leq c_6\log b$ (since the function $n\mapsto n^{1/n}$ reaches its maximum on the set $\mathbb{N^*}$ at $n=3$). Suppose for the sequel that $\omega(b)\geq 2$ and let us firstly show that:
\begin{equation}\label{lem}
\sum_{p\mid b}\frac{\log p}{p}\leq \sum_{p\leq p_{\omega(b)}}\frac{\log p}{p}.
\end{equation}
In the case where $b$ is even, Inequality \eqref{lem} immediately follows from the decrease of the function $x\mapsto\frac{\log x}{x}$ on the interval $[3,+\infty)$. If now $b$ is odd, then, denoting by $q$ the largest prime factor of $b$, we have (since $\omega(b)\geq 2$ by hypothesis) $q\geq 5$. It follows again from the decrease of the function $x\mapsto\frac{\log x}{x}$ on the interval $[3,+\infty)$ that:
\[\sum_{p\mid b}\frac{\log p}{p}=\sum_{\begin{subarray}{c} p\mid b\\ p\neq q\end{subarray}}\frac{\log p}{p}+\frac{\log q}{q}\leq \sum_{3\leq p\leq p_{\omega(b)}}\frac{\log p}{p}+\frac{\log 5}{5} \leq \sum_{p\leq p_{\omega(b)}}\frac{\log p}{p};\]
confirming \eqref{lem} also in the case where $b$ is odd.\\ Now, by taking the exponential of both sides of \eqref{lem}, we get
\begin{equation}\label{cor}
\prod_{p\mid b}p^{1/p}\leq \prod_{p\leq p_{\omega(b)}}p^{1/p}.
\end{equation}
For $\omega(b)\in\{2,3,4,5\}$, we check by hand that $\prod_{p\leq p_{\omega(b)}}p^{1/p}\leq c_6\log \left(\prod_{p\leq p_{\omega(b)}}p\right)\leq c_6 \log b$, which concludes (according to \eqref{cor}) to the required estimate of the lemma. If, on the contrary, $\omega(b)\geq 6$, then we have (from Theorem \ref{1}):
\[\prod_{p\leq p_{\omega(b)}}p^{1/p}\leq p_{\omega(b)}\leq \omega(b)\left(\log \omega(b) +\log \log \omega(b)\right).\]
Combining this with Theorem \ref{4} and the inequality $\omega(b)\leq \log b$ (which itself derives from Theorem \ref{4} and the fact that $\log b\geq c_5$, since $b\geq 2\cdot 3\cdot 5\cdot 7\cdot 11\cdot 13\geq e^{e^{c_5}}$), we get
\begin{align*}
\prod_{p\leq p_{\omega(b)}}p^{1/p}&\leq c_5 \frac{\log b}{\log \log b}\left(\log c_5+\log \log b -\log\log\log b +\log\log\log b\right)\\&= \left(c_5+\frac{c_5\log c_5}{\log\log b}\right)\log b\\&\leq \left(c_5+\frac{c_5\log c_5}{\log\log\left(2\cdot 3\cdot 5\cdot 7\cdot 11\cdot 13\right)}\right)\log b\\&\leq c_6 \log b.
\end{align*}
Which again concludes (according to \eqref{cor}) to the required estimate of the lemma. This completes the proof.
\end{proof}

\begin{lemma}\label{8}
Let $b\geq 2$ and $n\geq 2$ be two integers. Then, we have
\[\prod_{p\mid b}p^{\vartheta_{p}\left(n!\right)}\leq \left(c_4\log b\right)^{n}.\]
\end{lemma}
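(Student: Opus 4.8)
The plan is to reduce everything to a bound on the "per-prime'' factor $p^{1/(p-1)}$ and then borrow the constant from Lemma \ref{7} together with the convergent series in Theorem \ref{1}(\ref{3}). The starting point is Legendre's formula: for any prime $p$ we have
\[\vartheta_p(n!)=\sum_{k\geq 1}\left\lfloor\frac{n}{p^k}\right\rfloor\leq\sum_{k\geq 1}\frac{n}{p^k}=\frac{n}{p-1}.\]
Raising $p$ to this exponent and taking the product over the primes dividing $b$ gives
\[\prod_{p\mid b}p^{\vartheta_p(n!)}\leq\prod_{p\mid b}p^{n/(p-1)}=\left(\prod_{p\mid b}p^{1/(p-1)}\right)^{n},\]
so the whole lemma comes down to proving $\prod_{p\mid b}p^{1/(p-1)}\leq c_4\log b$ (at least for $b\geq 3$; the case $b=2$ I will treat by hand at the end).

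The bridge between Lemma \ref{7}, whose exponent is $1/p$, and the exponent $1/(p-1)$ that I actually need is the elementary identity $\frac{1}{p-1}=\frac1p+\frac{1}{p(p-1)}$. This factors the product as
\[\prod_{p\mid b}p^{1/(p-1)}=\left(\prod_{p\mid b}p^{1/p}\right)\cdot\left(\prod_{p\mid b}p^{1/(p(p-1))}\right).\]
The first factor is controlled by Lemma \ref{7}: for $b\geq 3$ it is at most $c_6\log b$. For the second factor I enlarge the product over $p\mid b$ to the product over all primes and rewrite it as an exponential, so that
\[\prod_{p\mid b}p^{1/(p(p-1))}\leq\prod_{p}p^{1/(p(p-1))}=\exp\left(\sum_{p}\frac{\log p}{p(p-1)}\right)\leq c_7,\]
where the last inequality is exactly Theorem \ref{1}(\ref{3}). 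Multiplying the two bounds and using the numerical fact that $c_6\,c_7\leq c_4$ (indeed $c_6\,c_7\approx 3.35608\leq 3.35609=c_4$) yields $\prod_{p\mid b}p^{1/(p-1)}\leq c_4\log b$, and hence the claim for every $b\geq 3$.

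It remains to dispose of $b=2$. Here the product has a single factor, and since $\vartheta_2(n!)=\sum_{k\geq 1}\lfloor n/2^k\rfloor<n$, we get $\prod_{p\mid 2}p^{\vartheta_p(n!)}=2^{\vartheta_2(n!)}<2^{n}\leq(c_4\log 2)^{n}$, the final step being valid because $c_4\log 2\approx 2.326>2$. I do not anticipate a serious obstacle in this argument; the only genuinely content-bearing move is spotting the splitting $\frac{1}{p-1}=\frac1p+\frac{1}{p(p-1)}$, which is precisely what converts the available $1/p$-estimate of Lemma \ref{7} into the required $1/(p-1)$-estimate, with the "defect'' $\sum_p\frac{\log p}{p(p-1)}$ being a convergent series that is absorbed into the constant $c_7$. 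The tightness of the numerical inequality $c_6 c_7\leq c_4$ is what pins down the value of $c_4$, so care must be taken that the constants are quoted to enough precision.
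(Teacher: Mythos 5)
Your proposal is correct and follows essentially the same route as the paper: Legendre's formula with $\lfloor n/p^k\rfloor\leq n/p^k$, the splitting $\frac{1}{p-1}=\frac{1}{p}+\frac{1}{p(p-1)}$ to combine Lemma \ref{7} with the convergent series of Theorem \ref{1}, the numerical check $c_6c_7\leq c_4$, and a separate hand treatment of $b=2$. The only cosmetic difference is that the paper handles $b=2$ first rather than last and leaves the enlargement of $\prod_{p\mid b}p^{1/(p(p-1))}$ to $\prod_{p}p^{1/(p(p-1))}$ implicit.
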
  
\begin{proof}
For $b=2$, the estimate of the lemma immediately follows from Legendre's formula. Indeed, we have
\[\prod_{p\mid 2}p^{\vartheta_{p}\left(n!\right)}=2^{\vartheta_{2}\left(n!\right)}=2^{\lfloor \frac{n}{2}\rfloor+\lfloor \frac{n}{4}\rfloor+\lfloor \frac{n}{8}\rfloor+\dots}\leq 2^{\frac{n}{2}+\frac{n}{4}+\frac{n}{8}+\dots}=2^{n}\leq \left(c_4\log 2\right)^{n}.\]
Now, suppose that $b\geq 3$. By using successively Legendre's formula, Lemma \ref{7} and the point \ref{3} of Theorem \ref{1}, we have
\begin{align*}
\prod_{p\mid b}p^{\vartheta_{p}\left(n!\right)}&=\prod_{p\mid b}p^{\left\lfloor\frac{n}{p}\right\rfloor + \left\lfloor\frac{n}{p^2}\right\rfloor +\dots}\leq \prod_{p\mid b}p^{\frac{n}{p} + \frac{n}{p^2} +\dots}=\left(\prod_{p\mid b}p^{\frac{1}{p}}\right)^n \left(\prod_{p\mid b}p^{\frac{1}{p(p-1)}}\right)^n\\&\leq \left(c_6 c_7\log b\right)^{n}\leq \left(c_4\log b\right)^{n},
\end{align*}
as required. This completes the proof of the lemma.
\end{proof}

\section{Proofs of our main results}

\begin{proof}[Proof of Theorem \ref{t1}]
Let $n\geq b+1$ be an integer and let us first assume that $a<b$. By using successively Lemmas \ref{6}, \ref{5} and \ref{8}, we have
\begin{align*}
L_{a,b,n}&=\left(\prod_{p\leq n}p^{\vartheta_p\left(L_{a,b,n}\right)}\right)\left(\prod_{p>n}p^{\vartheta_p\left(L_{a,b,n}\right)}\right)\\&\leq \frac{{c_2}^n}{(a+bn)^{\omega(b)}}\cdot\frac{a\left(a+b\right)\cdots \left(a+nb\right)}{n!}\cdot \prod_{\begin{subarray}{c} p\leq n \\ p\mid b\end{subarray}}p^{\vartheta_{p}\left(n!\right)}\\ &\leq {c_2}^n\frac{(a+nb)}{(a+nb)^{\omega(b)}}\cdot \frac{b(2b)(3b)\cdots \left(nb\right)}{n!}\cdot\prod_{p\mid b}p^{\vartheta_{p}\left(n!\right)}\\&\leq {c_2}^n b^n\left(c_4\log b\right)^n\leq \left(c_1\cdot b\log b\right)^n,
\end{align*}
as required. Next, if $a>b$, then by setting $q:=\left\lfloor a/b\right\rfloor$ and $a':=a-qb<b$, we have obviously $L_{a,b,n}$ divides $L_{a',b,q+n}$; which leads to the required result by applying the first case to the triplet $(a',b,q+n)$ instead of $(a,b,n)$. This completes the proof of the theorem.
\end{proof}

\begin{proof}[Proof of Theorem \ref{t2}]
It suffices to repeat the previous proof of Theorem \ref{t1} (the first case precisely) and to use the following estimate
\[\prod_{\begin{subarray}{c} p\leq n \\ p\mid b\end{subarray}}p^{\vartheta_{p}\left(n!\right)}=b^{\vartheta_{b}\left(n!\right)}=b^{\left\lfloor\frac{n}{b}\right\rfloor + \left\lfloor\frac{n}{b^2}\right\rfloor +\dots}\leq b^{\frac{n}{b-1}}\]
instead of that of Lemma \ref{8}.
\end{proof}

\begin{proof}[Proof of Corollary \ref{c1}]
Given $r\geq 2$ an integer and $n\geq r+1$ an integer, we have by using Estimate \eqref{f1} and then Theorem \ref{t1}:
\[\frac{n-1}{n}\log (r+1)\leq \frac{\log L_{1,r,n}}{n}\leq \log r+\log\log r +\log c_1.\]
The required result follows by taking the limits of both sides of the last double inequality, as $n$ tends to infinity, and use Estimate \eqref{bat}.
\end{proof}

\begin{proof}[Proof of Corollary \ref{c3}]
Let $x\geq k(k+1)$ and $m:=\left\lfloor \frac{x}{k}\right\rfloor$. We have clearly:
\[\theta(x;k,\ell)\leq \log\lcm\left(\ell,\ell+k,\dots,\ell+mk\right)=\log L_{\ell,k,m}.\]
On the other hand, since $m\geq k+1$ (because $x\geq k(k+1)$), we have (according to Theorem \ref{t2}):
\[\log L_{\ell,k,m}\leq m\left(\log c_2+\frac{k\log k}{k-1}\right)\leq x\left(\frac{2c_3}{k}+\frac{\log k}{k-1}\right),\]
which concludes to the required result.
\end{proof}

\end{document}